\documentclass[12pt,a4paper]{article}
\usepackage{indentfirst}
\setlength{\parskip}{3\lineskip}
\usepackage{amsmath,amssymb,amsfonts,amsthm,graphics}
\usepackage{makeidx}
\usepackage{amsmath,amssymb,amsfonts,amsthm,graphics,graphicx}
\usepackage{makeidx}
\usepackage{color,pict2e}
\usepackage{xcolor}
\usepackage{amsmath}
\usepackage{cite}
\usepackage{mathrsfs}
\usepackage{tikz}

\setlength{\parindent}{2em}

\setlength{\textwidth}{160mm} \setlength{\textheight}{23cm}
\setlength{\headheight}{0cm} \setlength{\topmargin}{0pt}
\setlength{\headsep}{0pt} \setlength{\oddsidemargin}{0pt}
\setlength{\evensidemargin}{0pt}
\newtheorem{theorem}{Theorem}[section]

\newtheorem{lem}[theorem]{Lemma}
\newtheorem{conj}[theorem]{Conjecture}

\theoremstyle{definition}

\def\-{\mbox{--}}

\begin{document}
\title{\large\bf Path decompositions of
Eulerian graphs}

\author{Yanan Chu\thanks{School of Mathematical Sciences, Suzhou University of Science and Technology, Jiangsu 215009, China. This research is supported by National Natural Science Foundation of China under grant No. 12201447 and 12271099.},
   Yan Wang\thanks{Corresponding author. E-mail: yan.w@sjtu.edu.cn. School of Mathematical Sciences, Shanghai Jiao Tong University, Shanghai 200240, China. This research is supported by National Key R\&D Program of China under grant No. 2022YFA1006400, National Natural Science Foundation of China under grant No. 12571376 and 12201400, and Shanghai Municipal Education Commission under grant No. 2024AIYB003.}}

\date{}

\maketitle
\begin{abstract}
Gallai's conjecture asserts that every connected graph on $n$ vertices can be decomposed into $\frac{n+1}{2}$ paths. For general graphs (possibly disconnected), it was proved that every graph on $n$ vertices can be decomposed into $\frac{2n}{3}$ paths. This is also best possible (consider the graphs consisting of vertex-disjoint triangles). Lov\'{a}sz showed that every $n$-vertex graph with at most one vertex of even degree can be decomposed into $\frac{n}{2}$ paths. However, Gallai's conjecture is difficult for graphs with many vertices of even degrees. Favaron and Kouider verified Gallai's conjecture for all Eulerian graphs with maximum degree at most $4$. In this paper, we show if $G$ is an Eulerian graph on $n \ge 4$ vertices and the distance between any two triangles in $G$ is at least $3$, then $G$ can be decomposed into at most $\frac{3n}{5}$ paths.

{\flushleft\bf Keywords}: Path decomposition, Eulerian graph, Gallai's conjecture
\end{abstract}

\section{Introduction}
\noindent
All graphs considered in this paper are finite, undirected and simple. A \emph{path decomposition} of a graph $G$ is a set of edge-disjoint paths whose union contains all the edges of $G$. Let $p(G)$
denote the minimum number of paths needed in a path decomposition of
$G$.
Gallai proposed the following well-known conjecture on path decomposition (see \cite{L}).

\begin{conj}\label{conj-gallai}
If $G$ is a connected graph on n vertices,
then $p(G)\leq \frac{n+1}{2}$.
\end{conj}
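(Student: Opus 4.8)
The plan is to argue by induction, working with a minimal counterexample: let $G$ be a connected graph on $n$ vertices with $p(G) > \frac{n+1}{2}$ chosen with as few edges as possible. The first reductions strip away easy local structure. If $G$ has a vertex $v$ of degree $1$, I would delete it, apply the inductive hypothesis to the connected graph $G - v$ on $n-1$ vertices, and reattach the pendant edge by extending a path that ends at the neighbor of $v$; if $\deg v = 2$ I would instead try to suppress $v$, decompose the smaller graph, and split the resulting path back at $v$. The delicate point already here is to \emph{guarantee} that the inductive decomposition supplies a path ending at the right neighbor of $v$. When it does not, one must locally reroute the decomposition before reinserting the edges at $v$, and keeping this rerouting from increasing the path count is the first genuine difficulty.

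The main conceptual step is to reduce to the case where $G$ is Eulerian, that is, every vertex has even degree. The motivation is the lower bound $p(G) \ge \frac{1}{2}\,|\{v : \deg v \text{ odd}\}|$, which holds because each odd-degree vertex must be an endpoint of at least one path in any decomposition; thus odd-degree vertices act as \emph{free} endpoints that make it easier to stay under $\frac{n+1}{2}$. To exploit this I would invoke Lov\'asz's theorem that every $n$-vertex graph decomposes into at most $\lfloor n/2 \rfloor$ paths and cycles, and then try to remove all the cycles from that decomposition: a cycle left on its own would have to be opened into a separate path and cost one extra unit, so each cycle must instead be spliced into a path that meets it. When the graph has enough odd vertices, this absorption is affordable, and the target bound follows; the whole problem therefore concentrates on the Eulerian regime.

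The crux, and the step I expect to be the main obstacle, is exactly the Eulerian case with many even-degree vertices. Here there are no forced endpoints, so Lov\'asz's count $\lfloor n/2 \rfloor$ is essentially tight against the target $\frac{n+1}{2}$, and \emph{every} cycle in the path-and-cycle decomposition must be merged into a neighboring path without creating a repeated vertex. I would attempt to choose a good Euler tour, or a good system of edge-disjoint paths, so that each surviving cycle meets a path at a usable endpoint and can be absorbed; but the real trouble is global rather than local, since resolving one cycle can destroy the endpoint structure needed to resolve another. Controlling all these merges simultaneously is precisely where the general conjecture has resisted proof, and it is why the restricted setting of this paper—Eulerian graphs whose triangles are pairwise far apart—is natural: separating the short cycles is designed to make exactly this splicing step tractable.
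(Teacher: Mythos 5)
You have not produced a proof, and no proof should be expected here: the statement is Conjecture~1.1 (Gallai's conjecture), which is open, and the paper does not prove it either --- it only establishes the partial results in Theorems~1.3 and~1.4 for Eulerian graphs with well-separated triangles. Your text is a strategy sketch whose final paragraph candidly concedes that the decisive step is unresolved, so the review reduces to naming where the plan breaks. The reductions in your first paragraph are already leaky: deleting a pendant vertex $v$ and applying induction to $G-v$ gives at most $\frac{n}{2}$ paths, but if no path of the inductive decomposition ends at the neighbor of $v$, adding the pendant edge as a new path gives $\frac{n}{2}+1 > \frac{n+1}{2}$, and there is no known rerouting procedure that always fixes this without increasing the count; likewise, suppressing a degree-$2$ vertex can create a parallel edge (when its two neighbors are adjacent), leaving the class of simple graphs, and can create new triangles, so the induction is not self-sustaining. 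These are not cosmetic issues --- they are why the conjecture cannot be dispatched by minimal-counterexample surgery.

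The deeper gap is exactly the one you identify but do not close: in the Eulerian regime, Lov\'asz's bound of $\lfloor n/2 \rfloor$ paths \emph{and cycles} leaves cycles that must each be absorbed into a path at no extra cost, and no argument is given (or known) that performs all these absorptions simultaneously. Your observation that ``resolving one cycle can destroy the endpoint structure needed to resolve another'' is precisely the obstruction that has kept the conjecture open since the 1960s; asserting that the problem ``concentrates'' there is a correct diagnosis, not a proof. Note also that the quantitative versions of this absorption that \emph{can} be controlled are exactly what this paper develops --- Lemmas~2.2 and~2.3 convert bounded families of edge-disjoint cycles through a common vertex into $|\mathcal{C}|+1$ paths, at a cost strictly worse than one path per cycle, which is why the paper obtains $\frac{3n}{5}$ rather than $\frac{n+1}{2}$, and only under the hypothesis that triangles are pairwise at distance at least~$3$. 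In short: your proposal recapitulates the standard reduction heuristics and correctly locates the open core, but it proves nothing beyond what was already known, and the statement remains a conjecture.
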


Conjecture \ref{conj-gallai} has been studied extensively. For example, the following contribution is due to Lov\'{a}sz \cite{L}.

\begin{theorem}[Lov\'{a}sz \cite{L}] \label{thm-Lov}
Let $G$ be a graph (possibly disconnected) on $n$ vertices. If $G$ contains at most one vertex of even degree, then $p(G)\leq  \frac{n}{2}$.
\end{theorem}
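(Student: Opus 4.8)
The plan is to prove by induction the inductive bound $p(G)\le \frac{n+t}{2}$, where $t$ is the number of triangles of $G$; the theorem then follows because the two hypotheses force $5t\le n$, whence $\frac{n+t}{2}\le\frac{n+n/5}{2}=\frac{3n}{5}$. This target is the natural one: it is tight on vertex-disjoint triangles ($n=3t$, $p=2t$), the extremal family for the general $\frac{2n}{3}$ bound, and also on a single $C_4$. The inequality $5t\le n$ is precisely where ``Eulerian'' and ``triangles far apart'' cooperate. Since $G$ is connected with $n\ge 4$, no triangle is a whole component, so every triangle $T=\{a,b,c\}$ has a vertex of even degree at least $4$ and hence at least two neighbours outside $T$, giving $|N[T]|\ge 5$; and since any two triangles are at distance at least $3$, no vertex is adjacent to two triangles and no two triangles intersect, so the closed neighbourhoods of distinct triangles are pairwise disjoint. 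Summing over the triangles yields $5t\le\sum_T|N[T]|\le n$.

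I would run the induction on $|E(G)|$, allowing the hypothesis to apply to arbitrary even graphs (disjoint unions of Eulerian graphs) in which triangles are pairwise at distance at least $3$, and deleting isolated vertices as they arise; since deleting edges creates no triangle and shortens no distance between surviving triangles, the hypothesis is preserved. The target $\frac{n+t}{2}$ is tight already on the triangle and on $C_4$, so these short cycles cannot be peeled off ``for free.'' The routine regime is the triangle-free one, where one removes or reroutes even substructures so that the number of paths never exceeds half the number of vertices consumed; long cycles and large even degrees are harmless here, because a vertex of degree $2k$ may be used as an interior vertex of up to $k$ paths, so that, for example, two squares sharing a vertex still decompose into two paths. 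The point of these moves is both to simplify the neighbourhood of each triangle and to drive the graph toward one with at most one vertex of even degree, where Theorem \ref{thm-Lov} applies and gives $p\le n/2=\frac{n+t}{2}$.

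The substantive step, and the one I expect to be the main obstacle, is eliminating a triangle while respecting the $\frac{n+t}{2}$ bookkeeping. In the tight configuration $\deg(a)=4$, $\deg(b)=\deg(c)=2$, with $a$ joined outside to $x$ and $y$, the three triangle edges together with $ax,ay$ split into the two paths $x\,a\,b\,c$ and $y\,a\,c$; afterwards $a,b,c$ disappear as isolated vertices and $t$ drops by $1$, so deleting three vertices and one triangle against two paths balances $\frac{n+t}{2}$ exactly. The difficulty is that this accounting is fragile: when the attachment vertex has degree larger than $4$, or when two or three triangle vertices send edges outside, or when the two outside edges cannot be chosen independently, a greedy deletion isolates too few vertices and loses $\frac12$ per triangle. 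I therefore do not expect a purely greedy reduction to succeed; instead the covering paths at a triangle must be spliced into paths that continue through the even, triangle-free neighbourhood, which is private to this triangle by the disjointness of the sets $N[T]$, so that the surplus cost is absorbed by the at least two private outside vertices. Making this rerouting work uniformly across all attachment patterns, while controlling the parities it changes and the disconnections it may cause, is the heart of the argument.

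Assembling the reductions, the accumulated inequalities telescope to $p(G)\le\frac{n+t}{2}$, and with $5t\le n$ this gives $p(G)\le\frac{3n}{5}$. A terminal graph with at most one vertex of even degree is closed out by Theorem \ref{thm-Lov}; reaching that regime requires the reductions to convert even vertices into odd ones — for instance the parities of the outside vertices $x,y$ flip in the surgery above — so the induction must in fact be carried out for the wider class of graphs carrying a tracked set of odd vertices, with the target adjusted to match. The base cases are immediate: for $n=4$ the only Eulerian graph is $C_4$, with $p=2\le\frac{12}{5}$, and for $n\le 5$ the distance hypothesis permits at most one triangle, each remaining sub-case being checked directly. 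All genuine difficulty is thus concentrated in the triangle-elimination case analysis; everything else is either the counting lemma $5t\le n$ or an application of Theorem \ref{thm-Lov}.
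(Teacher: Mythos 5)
Your proposal does not address the statement at all: the statement to be proved is Lov\'{a}sz's theorem itself --- every $n$-vertex graph with at most one vertex of even degree satisfies $p(G)\leq \frac{n}{2}$ --- whereas what you sketch is the paper's \emph{main} theorem (Theorem \ref{main-thm}, the $\frac{3n}{5}$ bound for Eulerian graphs whose triangles are pairwise at distance at least $3$). Worse, your argument explicitly invokes Theorem \ref{thm-Lov} as a black box in two places (``where Theorem \ref{thm-Lov} applies and gives $p\le n/2$'' and ``closed out by Theorem \ref{thm-Lov}''), so read as a proof of that theorem it is circular and contains no content toward it. Nothing in your write-up engages with the actual assertion: you never argue why a graph in which every vertex (or every vertex but one) has odd degree decomposes into $\frac{n}{2}$ paths; the standard ingredients of such a proof --- Lov\'{a}sz's induction on the number of edges, tracking how removing a path between odd-degree vertices changes parities, or first decomposing into paths and cycles and then absorbing the cycles --- are entirely absent from your text.

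For the record, the paper does not prove this statement either: it is quoted as a classical result with a citation to \cite{L}, and enters the paper only as background and via its descendants (Pyber's and Fan's extensions, and Theorem \ref{CFZ}). Even judged charitably as a blind attempt at the paper's main theorem, your sketch takes a different and incomplete route: the paper reduces to the triangle-free case by deleting a ``triangle removal set'' $R$, applies the Chu--Fan--Zhou bound (Theorem \ref{CFZ}) to $G-R$, and reattaches each removed vertex using Lemmas \ref{6cycle-7paths} and \ref{lem-cycles-6} on decomposing edge-disjoint cycles through a common vertex, with no induction maintaining an $\frac{n+t}{2}$ invariant; meanwhile you yourself flag the triangle-elimination bookkeeping at the heart of your plan as unresolved (``I do not expect a purely greedy reduction to succeed''). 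So the proposal both misses its target statement and, for the statement it implicitly aims at, leaves the essential step open.
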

Note that the bound $\frac{n+1}{2}$ of Conjecture \ref{conj-gallai} is sharp for connected graphs (consider the complete graphs of odd order).
For disconnected graphs, Donald\cite{Donald} extended Theorem \ref{thm-Lov} by showing that $p(G)\leq \frac{3n}{4}$ for any graph $G$ on $n$ vertices.
Furthermore, Dean and Kouider\cite{DK}, and Yan\cite{Yan} independently showed that $\frac{2n}{3}$ is best possible since the graph consisting of $k$ vertex-disjoint triangles needs at least $2k$ paths in any path decomposition. The \emph{girth} of a graph $G$ is the minimum length of a cycle in $G$. Harding and McGuinness \cite{HM} considered graphs with girth $g\geq 4$ and showed that $\frac{(g+1)n}{2g}$ is sufficient. Chu, Fan and Zhou \cite{CFZ} proved that $p(G) \le \frac{3n}{5}$ for triangle-free graphs.
For sufficiently large graph $G$ with linear minimum degree, Gir\~{a}o, Granet, K\"{u}hn and Osthus \cite{GGKO} showed that $p(G)\leq \frac{n}{2}+o(n)$.

As a consequence of Theorem \ref{thm-Lov}, Conjecture \ref{conj-gallai} holds for graphs with all vertices of odd degrees.
The main difficulty in studying Conjecture \ref{conj-gallai} is to handle graphs with many vertices of even degrees. Favaron and Kouider \cite{FK} verified Conjecture \ref{conj-gallai} for all Eulerian graphs with maximum degree at most $4$. For $2k$-regular graphs ($k\geq 3$), Botler and Jim\'{e}nez \cite{BJ} showed that if $G$ has girth at least $2k-2$ and a pair of disjoint perfect matchings, then $p(G)\leq \frac{n}{2}$.
The \textit{$E$-subgraph} of a graph $G$ is the subgraph induced by
the vertices of even degree in $G$.  Pyber \cite{P} extended Theorem \ref{thm-Lov} by showing that $p(G)\leq \frac{n}{2}$ if the $E$-subgraph of $G$ is a forest. A forest can be regarded as a graph in which each block is a single edge or an isolated vertex. Fan \cite{Fan} generalized Pyber's result by proving that if each block of the $E$-subgraph of $G$ is a triangle-free graph with maximum degree at most 3, then $p(G)\leq \frac{n}{2}$. Botler and Sambinelli \cite{BS} further extended this by allowing the components of
the $E$-subgraph to contain any number of blocks with triangles as long as they are subgraphs of a family of special graphs.
Other progress on Conjecture \ref{conj-gallai} can be found in \cite{BP,BJS,BSC,FHZ,ZLH}.

The \emph{distance} between two triangles $T_1$ and $T_2$ in a graph is the length of the shortest path between any vertex of $T_1$ and any vertex of $T_2$.
In this paper, we prove the following.

\begin{theorem}\label{main-thm}
Let $G$ be an Eulerian graph on $n \ge 4$ vertices. If the distance between any two triangles in $G$ is at least $3$, then $p(G)\leq  \frac{3n}{5}$.
\end{theorem}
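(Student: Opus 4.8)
The plan is to induct, using the Chu--Fan--Zhou bound \cite{CFZ} for triangle-free graphs as the base case, and to peel off one triangle at a time. Since the pairwise distance between triangles is at least $3$, the triangles are vertex-disjoint and no two of them have adjacent vertices; moreover the closed neighbourhood of each triangle is itself triangle-free and is disjoint from the neighbourhoods of the other triangles. This locality is what will let me treat the triangles one by one without disturbing the rest of the graph. Fix a triangle $T=abc$. Because $G$ is Eulerian, each of $a,b,c$ has even degree, so the number of edges leaving $T$ at each of its vertices is also even; write $s$ for the number of vertices of $T$ whose external degree is $0$. Deleting the three edges of $T$ keeps every degree even, deletes no other triangle, and cannot decrease any distance between the remaining triangles, so the hypotheses are inherited; the $s$ vertices of external degree $0$ become isolated and are discarded. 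Call the resulting graph $G^{*}$; it has $n-s$ non-isolated vertices and strictly fewer triangles than $G$.

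The induction will be carried on the number of triangles (equivalently on $|E(G)|$), with $n$ kept as the parameter in the target bound, so that the base case is exactly a triangle-free Eulerian graph, to which \cite{CFZ} applies and gives at most $3n/5$ paths. The heart of the argument is a lifting lemma: given a decomposition of $G^{*}$ into at most $3(n-s)/5$ paths, reinsert the three edges of $T$ so that the number of paths grows by at most $\lfloor 3s/5\rfloor$. Since $\lfloor 3s/5\rfloor$ equals $0$ when $s\le 1$ and $1$ when $s=2$ (and $s=3$ cannot occur for a connected graph on $n\ge 4$ vertices), this yields $p(G)\le 3(n-s)/5+\lfloor 3s/5\rfloor\le 3n/5$, closing the induction.

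The reinsertion is where the external degrees are used. When $s=2$ the triangle is a pendant triangle hanging from its unique attaching vertex $a$; its two degree-two vertices $b,c$ can be covered by extending one path of $G^{*}$ that reaches $a$ and then spending a single additional path, matching the freed budget $\lfloor 3\cdot 2/5\rfloor=1$. When $s\in\{0,1\}$ no budget is freed, so the three edges of $T$ must be absorbed \emph{without} creating any new path: the idea is to splice the new edges into paths that already end at $a,b,c$, merging two endpoints across an edge of $T$ so that covering a triangle edge does not increase, and may even decrease, the path count. Here the parity fact that every external degree is even is exactly what guarantees a supply of usable endpoints at the attaching vertices, and the distance-$3$ hypothesis guarantees that the local exchanges used to produce these endpoints---of the same flavour as those in \cite{CFZ}---stay inside the triangle-free neighbourhood of $T$ and neither touch another triangle nor create a new one.

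I expect the main obstacle to be precisely this absorption step when $s\in\{0,1\}$, where there is no slack at all and the triangle must be incorporated for free. To make it go through I anticipate having to strengthen the inductive statement so that the decomposition of $G^{*}$ is produced together with a guarantee on its endpoints at the attaching vertices of every triangle, rather than invoking \cite{CFZ} as a black box on $G^{*}$. A second, more technical difficulty is connectivity: deleting the three edges of $T$ can disconnect $G$, which would leave the Eulerian (connected) setting and risks producing a component that is itself a triangle and violates the bound. I plan to control this by using that a connected even graph is bridgeless, so that a disconnection can only occur in a restricted way that the distance-$3$ condition pins down; in those cases the triangle together with its attaching vertices is handled by a direct local decomposition instead of by the generic deletion. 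The remaining cases on the external-degree distribution of $T$ should then reduce to a finite, if tedious, case analysis.
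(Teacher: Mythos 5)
There is a genuine gap, and it sits exactly where you predicted: the absorption step for $s\in\{0,1\}$. Your parity claim is backwards. After deleting only the three edges of $T$, every vertex of $G^{*}$ still has even degree; in any path decomposition $\mathcal{P}$, the number of paths ending at an even-degree vertex is \emph{even and possibly zero}, whereas it is \emph{odd} degree that forces $\mathcal{P}(v)\geq 1$. So invoking Theorem~\ref{CFZ} as a black box on $G^{*}$ gives you no path ends at $a,b,c$ at all, and with zero freed budget you also cannot pay for the failure mode of splicing: when you join two paths across an edge of $T$ at a shared endpoint, the result can be a cycle or have a repeated vertex, and repairing that costs extra paths you do not have. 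You acknowledge that the induction would have to be strengthened to carry endpoint guarantees at the attaching vertices, but that strengthened statement \emph{is} the whole difficulty of the problem: prescribing path ends at even-degree vertices within a $3n/5$ budget is not something the triangle-free results provide, so the proposal as written reduces the theorem to an unproved statement that is at least as hard.

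The paper's proof shows what resolves both problems at once, and it is a different deletion: remove from each triangle a single vertex $v_i$ of degree $d_i\geq 4$ (a triangle removal set; the distance-$3$ hypothesis gives the disjoint closed neighborhoods, and Eulerian plus $n\geq 4$ gives $d_i \geq 4$). In $G'=G-R$ all $d_i$ neighbors of $v_i$ become odd, so the \emph{refined} form of Theorem~\ref{CFZ}, $p(G')\leq \frac{\alpha(G')}{2}+\lfloor\frac{3\beta(G')}{5}\rfloor$, simultaneously (i) guarantees a path end at every neighbor of $v_i$, and (ii) frees a surplus of $\frac{3}{5}(1+d_i)-\frac{d_i}{2}=\frac{3}{5}+\frac{d_i}{10}\geq 1$ per removed vertex, because odd vertices cost $\frac12$ instead of $\frac35$. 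That surplus funds the unavoidable bad event: when both ends of a path are neighbors of $v_i$, reattaching $v_i$ turns it into a cycle; all such cycles share the vertex $v_i$, and Lemmas~\ref{6cycle-7paths} and \ref{lem-cycles-6} decompose $q\leq \frac{d_i}{2}$ such cycles into roughly $\frac{7q}{6}$ paths, which the freed budget absorbs after the paper's case analysis on $q \bmod 6$. Your edge-deletion scheme frees budget only through isolated vertices ($\lfloor 3s/5\rfloor$), which vanishes precisely in the hard cases, so it can fund neither the endpoints nor the cycle repair. (Your connectivity worry, by contrast, is a red herring: a triangle component cannot arise after deleting $E(T)$, since its vertices would keep all their other $G$-edges.) If you want to salvage your plan, replacing ``delete the three edges of $T$'' by ``delete one high-degree vertex of $T$'' is the single change that makes parity work for you instead of against you.
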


Note that any path decomposition of a triangle requires at least two paths. So we need $n \ge 4$ in the theorem above.

A \emph{triangle removal set} of a graph $G$ is a set of vertices $R=\{v_1,\ldots,v_k\}$ such that $G-R$ is triangle-free and $R$ has the following properties: for any distinct $i, j\in [k]$, (i) $d_G(v_i)\geq 4$; (ii) all the vertices in $V(G)\backslash R$ have even degree in $G$; and (iii) ($N_G(v_i)\cup \{v_i\})\cap (N_G(v_j)\cup \{v_j\})=\emptyset$.

In fact, we show the following theorem which is slightly stronger than Theorem 1.3.

\begin{theorem}\label{second-thm}
Let $G$ be a graph on $n$ vertices with no component isomorphic to a triangle.
If $G$ has a triangle removal set, then $p(G)\leq  \frac{3n}{5}$.
\end{theorem}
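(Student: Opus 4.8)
The plan is to reduce Theorem~\ref{second-thm} to the triangle-free case handled by Chu, Fan and Zhou, by carefully ``dismantling'' the triangles that sit around the triangle removal set $R$. Let $R=\{v_1,\ldots,v_k\}$ be the triangle removal set. The key structural feature we exploit is condition~(iii): the closed neighborhoods $N_G(v_i)\cup\{v_i\}$ are pairwise disjoint, so the local modifications we make near each $v_i$ do not interfere with one another and can be carried out independently. First I would record the consequence of conditions (i) and (ii): every vertex outside $R$ has even degree, and each $v_i$ has degree at least $4$, so $G$ is ``almost Eulerian'' with the odd/high-degree behaviour concentrated at the $v_i$'s. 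The guiding accounting identity is that Theorem~\ref{second-thm} asserts $p(G)\le \tfrac{3n}{5}$, whereas the triangle-free bound of \cite{CFZ} already gives $\tfrac{3}{5}$ times the number of vertices of any triangle-free graph; the whole game is therefore to transform $G$ into a triangle-free graph $G'$ while controlling both the change in vertex count and the number of extra paths we must pay for the surgery.

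The central step is a vertex-splitting operation at each $v_i$. Concretely, I would replace $v_i$ by several new vertices of degree $2$ by partitioning the edges incident to $v_i$ into pairs and giving each pair its own copy of $v_i$; since $d_G(v_i)\ge 4$ is even, such a pairing exists and breaks every triangle through $v_i$. One must choose the pairing so that no new triangle is created and so that the resulting graph $G'$ is genuinely triangle-free --- here is where we use that $G-R$ is triangle-free, meaning every triangle of $G$ passes through exactly one vertex of $R$ (distinct $v_i,v_j$ cannot lie in a common triangle by~(iii)). After splitting each $v_i$ into, say, $t_i=d_G(v_i)/2\ge 2$ degree-$2$ vertices, the graph becomes triangle-free, so \cite{CFZ} applies to $G'$ and yields a path decomposition of $G'$ into at most $\tfrac{3}{5}\,|V(G')|$ paths. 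The final bookkeeping step is to push this decomposition of $G'$ back to a decomposition of $G$: paths of $G'$ map to walks/paths of $G$, and at each split vertex $v_i$ one may have to concatenate or re-cut paths, paying a bounded number of extra paths per $v_i$. The new vertex count is $|V(G')| = n - k + \sum_i t_i$, and one verifies the inequality
\begin{equation}
\tfrac{3}{5}\,|V(G')| + (\text{extra paths from re-cutting}) \le \tfrac{3}{5}\,n,
\end{equation}
which forces the extra cost per $v_i$ to be bounded by $\tfrac{3}{5}(1 - t_i)$ in aggregate --- so the larger $d_G(v_i)$ is, the more slack the splitting provides, consistent with condition~(i) that $d_G(v_i)\ge 4$.

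I expect the main obstacle to be this last reconciliation: ensuring that reassembling the $G'$-paths at each split vertex $v_i$ does not cost more paths than the vertex-count inflation $\tfrac{3}{5}(t_i-1)$ buys us. The delicate point is that when we un-split $v_i$ we are identifying $t_i$ degree-$2$ vertices into one vertex of degree $2t_i$, and the paths of $G'$ passing through these copies must be spliced together into paths of $G$ without exceeding the budget; because path-endpoints and the parity of degrees at $v_i$ govern how many paths are forced to terminate at $v_i$, the argument has to track these endpoints carefully. A secondary technical issue is guaranteeing that the edge-pairing at $v_i$ can always be chosen to avoid creating new triangles in $G'$; this should follow from the distance/disjointness hypotheses, but it requires a short case analysis on how the neighbors of $v_i$ are interconnected. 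If these two points go through, the bound $\tfrac{3n}{5}$ follows directly, and Theorem~\ref{main-thm} is recovered by checking that any Eulerian graph whose triangles are pairwise at distance at least $3$ admits a triangle removal set consisting of one suitable high-degree vertex per triangle.
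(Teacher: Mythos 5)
Your proposal has a genuine gap, and it is exactly the point you flag as ``the main obstacle'': the accounting for vertex-splitting goes the wrong way, and no mechanism is offered (or available) to fix it. Splitting $v_i$ into $t_i=d_G(v_i)/2\ge 2$ degree-$2$ copies \emph{inflates} the vertex count to $|V(G')|=n+\sum_i(t_i-1)\ge n+k$, so the bound of \cite{CFZ} applied to $G'$ gives at least $\frac{3n}{5}+\frac{3}{5}\sum_i(t_i-1)$, and your own inequality then demands that un-splitting \emph{save} $\frac{3}{5}(t_i-1)$ paths per vertex. But identification of the copies can only hurt: a path of $G'$ that visits two or more copies of $v_i$ becomes a trail through $v_i$ in $G$ and must be cut into \emph{more} paths, while a saving would require merging paths that end at the copies --- and the copies have degree $2$ and even degree, so the decomposition from \cite{CFZ} gives no guarantee that any path ends there (even-degree vertices are generically internal). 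There is also a secondary, independently fatal issue with the splitting itself: pairing the edges at $v_i$ so that no pair of matched neighbors is adjacent is not always possible. Nothing in the definition of a triangle removal set prevents one neighbor $a$ of $v_i$ from being adjacent to all other neighbors of $v_i$ (the triangles $v_iab$, $v_iac$, $v_iad$ all contain $v_i\in R$, so $G-R$ is still triangle-free); then every pairing puts $a$ with an adjacent neighbor and creates a triangle in $G'$.

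The paper's proof runs the surgery in the opposite direction, and this is what makes the arithmetic work: it \emph{deletes} $R$ (shrinking the graph by $k$ vertices) and, crucially, uses the refined form of the triangle-free bound, $p \le \frac{\alpha}{2}+\lfloor\frac{3\beta}{5}\rfloor$, rather than $\frac{3}{5}|V|$. After deletion, every neighbor of every $v_i$ has \emph{odd} degree in $G-R$ (here disjointness of closed neighborhoods and simplicity are used), so each such vertex is charged only $\frac{1}{2}$ instead of $\frac{3}{5}$, producing per-vertex slack $\frac{d_i}{2}-\frac{3}{5}(1+d_i)<0$; moreover odd degree guarantees each neighbor is an endpoint of some path, which is precisely the structural handle for re-attaching the star at $v_i$ to the decomposition. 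The re-attachment can close some paths into cycles, all sharing the common vertex $v_i$, and the paper's Lemmas 2.2--2.4 show that $q$ edge-disjoint cycles through a common vertex decompose into roughly $\frac{7q}{6}$ paths; since $q\le \frac{d_i}{2}$, this overhead is absorbed by the slack. Your proposal is missing both of these ingredients --- the endpoint guarantee from odd degrees and the common-vertex cycle-decomposition lemma --- and without them the reassembly step cannot be made to close.
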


Note that Theorem \ref{main-thm} is related to the cycle decompositions of Eulerian graphs.
In 1960s, Erd\H{o}s and Gallai conjectured that every $n$-vertex Eulerian graph can be decomposed into $O(n)$ cycles (see \cite{EGP}).
An equivalent and more well-known form is that every $n$-vertex graph can be decomposed into $O(n)$ cycles and edges.
This conjecture was established for typical random graphs in \cite{CFS,GKO,KKS}.
Conlon, Fox and Sudakov \cite{CFS} proved the conjecture for graphs with linear minimum degree while the asymptotically sharp bound of $(\frac{3}{2} + o(1))n$ was shown in \cite{GGKO}.
For general graphs, Erd\H{o}s and Gallai proved that every $n$-vertex graph can be decomposed into $O(n \log n)$ cycles and edges by iteratively removing a longest cycle.
Conlon, Fox and Sudakov \cite{CFS} obtained a major breakthrough by showing that $O(n \log \log n)$ cycles and edges suffice.
Recently, Bucić and Montgomery \cite{BM} further improved the bound to $O(n \log^\star n)$ where $\log^\star$ is the iterated logarithm function.

The organization of this paper is as follows. In the next section, we show several useful lemmas. We prove Theorems \ref{main-thm} and \ref{second-thm} in Section 3.

We conclude this section with some useful notations.
Let $G$ be a graph.
The set of vertices and edges of $G$ are denoted by $V(G)$ and $E(G)$, respectively.
Let $u, v\in V(G)$. The edge with ends $u$ and $v$ is denoted by $uv$.
We use $N_{G}(v)$ and $d_{G}(v)$ to denote the set and the number
of neighbors of $v$ in $G$, respectively.
For $S\subseteq V(G)$, $G-S$ denotes the induced subgraph of $G$ on $V(G) \setminus S$.
Let $F$ be a set of edges (resp. non-edges) of $G$.
We denote the graph obtained from $G$ by deleting (resp. adding) all the edges of $F$ by $G-F$ (resp. $G+F$).
When $F=\{uv\}$, we simply write $G-uv$ (resp. $G+uv$) instead of $G-\{uv\}$ (resp. $G+\{uv\}$).
Let $P$ be a path and $x$, $y$ be two vertices on $P$. The subpath of $P$ with end vertices $x$ and $y$ is denoted by $xPy$.
Note if $x=y$, then $xPy$ is just the vertex $x$.
We say that a graph $G$ can be \emph{decomposed into} $k$ paths if $G$ has a path decomposition $\mathcal{P}$ such that $|\mathcal{P}|=k$.
Given a path decomposition $\mathcal{P}$ of $G$ and a vertex $v\in V(G)$, we denote by $\mathcal{P}(v)$ the number of paths in $\mathcal{P}$ with $v$ as an end vertex. Note that if $v$ is a vertex of $G$ with odd degree, then $\mathcal P(v)\geq1$.

\section{Decomposition lemmas}

\noindent
We need the following lemma in \cite{CFL}.

\begin{lem}[Lemma 2.1 in \cite{CFL}]\label{lem-K_5^-}
Let $G$ be a connected graph consisting of edge-disjoint path $P$ and cycle $C$. If $|V(P)\cap V(C)|\leq 5$, then $G$ can be decomposed into two paths, unless $G$ is isomorphic to the exceptional graph (see Figure $1$).
\end{lem}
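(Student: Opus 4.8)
The plan is to exhibit two edge-disjoint simple paths whose union is all of $E(G)$, arguing directly from the degree structure. Every vertex of $C$ has degree $2$ in $C$, and every vertex of $P$ has degree $2$ except its two ends; hence in $G=P\cup C$ the only odd-degree vertices are the two endpoints $a,b$ of $P$ (of degree $1$ or $3$), each shared vertex internal to $P$ has degree $4$, and all remaining vertices have degree $2$. In particular $\Delta(G)\le 4$, and $G$ has exactly two odd vertices (if $P$ has no edge then $G=C$ is a cycle and splits into two paths trivially, so assume $P$ has an edge). A short parity count then shows that in any decomposition of $G$ into two nontrivial paths the two paths must share exactly one common end $c$, necessarily a vertex of even degree, with the other two ends being $a$ and $b$; equivalently the two paths form an Eulerian $(a,b)$-trail that has been cut at $c$ into two simple pieces. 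Thus the task reduces to producing such a pair, and the shared vertices $v_1,\dots,v_k$ (with $k=|V(P)\cap V(C)|\le 5$) are the only places where a naive ``traverse $P$, then run around $C$'' walk can fail to be simple.

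Next I would cut down the number of vertices. Every degree-$2$ vertex that is not shared lies either inside an arc of $C$ between consecutive shared vertices or inside a segment of $P$ between consecutive shared vertices; suppressing (smoothing) all of them, while keeping the graph simple, replaces each such arc and segment by a single edge and does not affect the existence of a two-path decomposition, since a decomposition of the smoothed graph lifts by re-subdividing edges inside the paths. This leaves, up to finitely many small configurations, a ``core'' supported on the at most $k$ shared vertices together with at most two pendant ends of $P$. The basic local move is already visible when $k=1$: if $P$ and $C$ meet only at $v$, one breaks $C$ at $v$ into its two arcs and grafts one arc onto each side of $P$ at $v$, always yielding two simple paths meeting at a common end. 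The same rerouting principle governs the general case: at each shared vertex one pairs its incident edges into two ``through-strands'' chosen so that the global result is two simple paths, rather than one path together with a closed cycle or a disconnected leftover.

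I would then run the case analysis on $k\in\{2,3,4,5\}$, organized by the cyclic order of $v_1,\dots,v_k$ around $C$, their linear order along $P$, and which (if any) of them coincide with the ends $a,b$. In each arrangement the pairings at the shared vertices are chosen to avoid closing up a subcycle and to keep the two pieces connected, producing the desired two simple paths. The guide to where this must fail is an edge count: two simple paths on $N$ vertices together contain at most $2N-2$ edges, whereas $|E(G)|=|E(P)|+|E(C)|=N+k-1$; these are incompatible exactly when $k=N$, that is, when every vertex is shared and $V(P)=V(C)$. Examining the all-shared configurations, for $k\le 4$ there is no room for an edge-disjoint Hamiltonian path on the cycle's vertex set, while for $k=5$ the only realizable graph is $K_5$ minus an edge, which has $9>2\cdot 5-2=8$ edges and therefore genuinely admits no splitting into two simple paths. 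This is precisely the exceptional graph of Figure $1$.

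The main obstacle is twofold. The bulk of the labour is the exhaustive but elementary casework for $k=4$ and $k=5$, where many distinct orderings of the shared vertices occur and each requires writing down an explicit valid pairing. The subtler point is controlling the smoothing step near the exceptional case: a proper subdivision of $K_5^-$ with $N\ge 6$ has $|E|=N+4\le 2N-2$ and is in fact decomposable, so one must take care not to over-smooth a decomposable graph down to the indecomposable $K_5^-$. Concretely, the reduction should be invoked only while $N>5$, and the all-shared five-vertex configurations must be checked by hand to confirm that $K_5$ minus an edge is the unique graph resisting a two-path decomposition.
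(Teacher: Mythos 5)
The paper does not prove this lemma at all --- it is imported verbatim from \cite{CFL} (Lemma 2.1 there) --- so there is no in-paper proof to compare against; judged on its own merits, your proposal contains a genuine gap: you misidentify the exceptional family. The exceptional graph of Figure 1 is \emph{not} just $K_5$ minus an edge on the five shared vertices. The dashed lines in the figure are subpaths of $P$: the exception is $K_5^-$ together with two pendant paths (of arbitrary length, possibly zero) attached at the two vertices of degree $3$, i.e.\ at the ends of the missing edge. These pendant-extended graphs have $N\ge 6$ vertices and satisfy your edge count $|E(G)|=N+4\le 2N-2$, yet they are still indecomposable --- and your own parity analysis shows why: any two-path decomposition must consist of a path from one pendant end to a common vertex $c$ and a path from the other pendant end to $c$, so each of the two paths meets the $9$-edge core $K_5^-$ in at most one simple subpath (entering at $v_1$, respectively $v_4$), and two simple paths in a $5$-vertex graph cover at most $4+4=8<9$ edges. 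Hence your pivotal claim that decomposability fails ``exactly when $k=N$'' is false: the $2N-2$ count is a sufficient condition for indecomposability, not a characterization, and your conclusion that the unique exception is $K_5^-$ itself contradicts the lemma as stated (whose exception is the family depicted in Figure 1).

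The error survives your own reduction, so it is not a cosmetic slip. Smoothing suppresses only interior degree-$2$ vertices; the degree-$1$ pendant ends of $P$ remain, so among your ``core plus at most two pendant ends'' configurations with $k=5$ you necessarily encounter $K_5^-$ with pendant edges at the two degree-$3$ vertices, and your casework, as organized, is committed to producing a two-path decomposition of it (since $k<N$), which is impossible. Your closing caution about ``over-smoothing'' addresses proper subdivisions of $K_5^-$ --- those are indeed decomposable, consistent with the lemma, since subdividing a core edge destroys the tight core-coverage obstruction --- but it conflates subdivision of core edges with attachment of pendant paths at $v_1$ and $v_4$, which are different operations with opposite outcomes. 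To repair the argument you must enlarge the exception list to the full pendant family and then show these are the \emph{only} indecomposable configurations, replacing the global $2N-2$ count by the sharper core-coverage bound above (two simple subpaths entering the shared core from prescribed vertices can carry at most $2k-2$ core edges) applied throughout the $k=4,5$ casework.
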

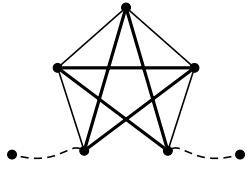
\begin{figure}[htbp]
\begin{center}
\begin{tikzpicture}[scale=0.5]

 \node  at(0,-0.2) [circle, draw=black, fill=black, scale=0.3pt] (x1) {};

 \node at (-1.1,-4)[circle, draw=black, fill=black, scale=0.3pt] (v1)  {};

 \node  at (1.8,-1.8)  [circle, draw=black, fill=black, scale=0.3pt] (v2)  {};

 \node at (-1.8,-1.8)   [circle, draw=black, fill=black, scale=0.3pt] (v3) {};

 \node at (1.1,-4)   [circle, draw=black, fill=black, scale=0.3pt]  (v4) {};
 \node at (-3,-4.1)   [circle, draw=black, fill=black, scale=0.3pt]  (y1) {};
 \node at (3,-4.1)   [circle, draw=black, fill=black, scale=0.3pt]  (y2) {};

 \draw [semithick](v1)--(v3)--(x1)--(v2)--(v4);
 \draw [very thick](v1)--(v2)--(v3)--(v4)--(x1)--(v1);
 \draw [semithick,densely dashed](v1).. controls (-1.5,-3.8) and (-1.8,-4.4).. (y1);
 \draw [semithick,densely dashed](v4) .. controls (1.5,-3.8) and (1.8,-4.4) .. (y2);

\end{tikzpicture}
\end{center}\caption{\small The exceptional graph with $|V(C)|=5$ and $|V(P)\cap V(C)|=5$ ($C$ is depicted by thick solid lines; $P$ is depicted by thin lines, with dashed lines used to represent its subpaths).} \label{fig1}
\end{figure}

We obtain the following lemma, which is an extension of Lemma \ref{lem-K_5^-}. Note that the condition $|V(P)|\leq 4$ in Lemma \ref{lem-at most 5 cycles to paths} is necessary because of the
exceptional graph in Figure 1 (note $|V(P)|=5$ and $|\mathcal{C}|=1$ in the exceptional graph). Let $\mathcal{C}$ be a set of edge-disjoint subgraphs of $G$. Write $V(\mathcal{C})=\bigcup_{C\in \mathcal{C}}V(C)$ and $E(\mathcal{C})=\bigcup_{C\in \mathcal{C}}E(C)$.

\begin{lem}\label{lem-at most 5 cycles to paths}
Let $\mathcal{C}$ be a set of edge-disjoint cycles and $P$ be a path such that $E(P)\cap E(C)=\emptyset$ and $V(P)\cap V(C)\neq \emptyset$ for every cycle $C\in \mathcal{C}$. If $|V(P)|+ |\mathcal{C}|\leq 6$ and $|V(P)|\leq 4$, then $E(P)\cup E(\mathcal{C})$ can be decomposed into $|\mathcal{C}|+1$ paths.
\end{lem}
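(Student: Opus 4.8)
The plan is to induct on $k:=|\mathcal{C}|$, peeling off one cycle and one set-aside path at each step. For the base case $k=1$ I apply Lemma~\ref{lem-K_5^-} to $P$ and the single cycle $C$: since $P$ and $C$ share a vertex their union is connected, and as $|V(P)|\le 4$ we have $|V(P)\cap V(C)|\le 4<5$, so the union is never the exceptional graph of Figure~\ref{fig1} and decomposes into $2=|\mathcal{C}|+1$ paths. (The degenerate case $|V(P)|=1$, a collection of cycles through one common vertex, I would dispose of separately by pairing the cycles at that vertex.)

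For the inductive step $k\ge 2$ I would remove one cycle $C$ and extract one path $Q$, recursing on $\mathcal{C}\setminus\{C\}$ together with a new path $P'$ that obeys the same hypotheses, namely $|V(P')|\le 4$ and $|V(P')|+(k-1)\le 6$; induction then gives at most $k$ paths, which with $Q$ yields at most $k+1$. I split into two cases. If an end $p_1$ of $P$ lies on a cycle $C=p_1a_1\cdots a_{m-1}p_1$, I set $Q=p_1a_1\cdots a_{m-1}$ and extend $P$ across the single remaining edge $a_{m-1}p_1$ to get $P'$; then $V(P')=V(P)\cup\{a_{m-1}\}$, so $|V(P')|+(k-1)=|V(P)|+k\le 6$ and the budget is preserved. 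If neither end of $P$ lies on a cycle, I take the least-indexed vertex $p_a$ (so $a\ge 2$) of $P=p_1\cdots p_s$ that lies on a cycle $C$, extend the pendant subpath $p_1\cdots p_a$ on through $C$ to form $Q$, and let $P'$ be the remaining subpath $p_a\cdots p_s$ together with the one leftover edge of $C$ at $p_a$; here $|V(P')|\le|V(P)|$, so the hypotheses again persist. In either case every remaining cycle still meets $P'$: trivially in the first case, and in the second because each cycle attaches at some $p_j$ with $j\ge a$, hence at a vertex of $P'$.

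I expect the main obstacle to be the boundary of the first case when $|V(P)|=4$, where appending a vertex forces $|V(P')|=5$; this breaks the size bound and threatens the exceptional graph of Lemma~\ref{lem-K_5^-} at the last step. Because the budget then forces $k=2$, I would treat $|V(P)|=4$ as an extended base case, dispatching this single configuration by hand: either arrange the pendant-type reduction at the opposite, cycle-free end, or, after absorbing one cycle, verify directly that the resulting five-vertex path and the remaining cycle do not form the exceptional graph (which would require that cycle to run through all five vertices) and resolve that lone pattern explicitly. A minor technical point, also settled by inspection since $|V(P)|\le 4$ admits only finitely many patterns, is to orient $C$ so that the leftover cycle-edge lands off $P$, so that $P'$ is a genuine simple path even when $C$ meets several vertices of $P$.
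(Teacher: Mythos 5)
Your inductive skeleton is exactly the paper's: peel a cycle at the least-indexed vertex $v_i$ of $P$ lying on $V(\mathcal{C})$, absorb the prefix $v_1Pv_i$ into the peeled cycle-path, extend the remaining subpath by one leftover cycle edge, and keep the budget $|V(P')|+|\mathcal{C}'|\leq 6$; your Case B (attachment at an internal vertex, $|V(P')|\leq|V(P)|$) is correct and matches the paper. However, the two places you defer to ``inspection'' or ``by hand'' are precisely where the paper's proof has its substantive content, and your optimistic remarks about them are wrong, so there is a genuine gap. First, the orientation fix fails outright in the configuration $|V(P)|=4$ with attachment at the end $v_1$: both $C_1$-neighbors of $v_1$ can lie on $P$, and then necessarily $\{x_1,x_2\}=\{v_3,v_4\}$, so no choice of ``leftover cycle-edge'' lands off $P$. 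Nor can you retreat to the ``opposite, cycle-free end'': in this configuration $v_4$ itself lies on $C_1$, and its two $C_1$-neighbors are $v_1$ and possibly $v_2$, so the same obstruction can recur there. This is not a finite-check formality but needs a genuinely different move, which the paper supplies: reroute through the \emph{path} edge $v_3v_4$, taking $P''=xv_3v_2v_1v_4$ (where $x$ is the other $C_1$-neighbor of $v_3$, which one checks is off $P$) and $P_4=C_1-xv_3-v_1v_4+v_3v_4$, and then observe $P''\cup C_2$ cannot be exceptional because exceptionality would force $v_3v_4\in E(C_2)$, contradicting $v_3v_4\in E(P)$. Your proposal contains no analogue of this edge swap.

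Second, in your extended base case ($|V(P)|=4$, $|V(P')|=5$, $|\mathcal{C}|=2$) you hope to ``verify directly'' that $P'\cup C_2$ is not the exceptional graph of Lemma~\ref{lem-K_5^-}, noting this would require $C_2$ to run through all five vertices of $P'$ --- but that configuration is realizable: $C_2=v_1v_3x_1v_2v_4v_1$ on exactly $V(P')$ is compatible with all hypotheses, so the verification you propose is false as stated. The paper escapes by switching the extension to the \emph{other} neighbor $x_2$ of $v_1$ on $C_1$: in the exceptional configuration $C_2$ exhausts all five vertices and uses the edges $v_1v_3$, $v_1v_4$, which forces $x_2\notin V(P)\cup V(C_2)$, so the redefined $P'=P+x_2v_1$ meets $C_2$ in at most four vertices and Lemma~\ref{lem-K_5^-} applies. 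You flag this case but do not resolve it, and resolving it is the heart of the proof. A smaller point: your side-treatment of $|V(P)|=1$ by ``pairing the cycles at that vertex'' is both unnecessary --- your own Case A handles it, producing a two-vertex $P'$ within budget, exactly as the paper's induction does --- and unsound as a method, since two edge-disjoint cycles through a common vertex need not decompose into two paths (two edge-disjoint $5$-cycles whose union is $K_5$ have $10$ edges, while two paths on five vertices cover at most $8$).
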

\begin{proof}
The proof follows by induction on $|\mathcal{C}|$.
It is easy to see that the case when $|\mathcal{C}|=0$ holds.
If $|\mathcal{C}|=1$, let $C$ be the only cycle in $\mathcal{C}$, then $|V(P)\cap V(C)|\leq 4$ since $|V(P)|\leq 4$.
So $E(P)\cup E(C)$ is not isomorphic to the exceptional graph.
Thus by Lemma \ref{lem-K_5^-}, $E(P)\cup E(C)$ can be decomposed into two paths.

Now suppose that $|\mathcal{C}|\geq 2$ and the lemma holds for smaller $|\mathcal{C}|$. Let $P=v_1v_2\cdots v_k$, where $k\leq 4$. We may assume that $v_i$ $(1\leq i\leq k)$ is the vertex of $P$ with smallest index that belongs to $V(\mathcal{C})$, say $v_i\in V(C_1)$ for some $C_1\in \mathcal{C}$. Let $x_1$ and $x_2$ be two neighbors of $v_i$ on $C_1$.
If one of $x_1$ and $x_2$, say $x_1$, is not in $V(P)$,
let $P'=P+x_1v_i-E(v_1Pv_i)$, $P_1=C_1+E(v_1Pv_i)-x_1v_i$, and $\mathcal{C}'=\mathcal{C}\backslash \{C_1\}$. Clearly, $|V(P')|+|\mathcal{C}'|\leq 6$. If $|V(P')|\leq 4$, then by inductive hypothesis, $E(P')\cup E(\mathcal{C}')$ can be decomposed into $|\mathcal{C}'|+1$ paths. Together with $P_1$, we obtain $|\mathcal{C}|+1$ paths, as desired. Thus, assume that either $|V(P')|=5$, or $x_1,x_2\in V(P)$. In both cases, it follows that $k=4$, $i=1$ and $|\mathcal{C}|=2$. Let $\mathcal{C}=\{C_1,C_2\}$.

Next, we show that
$E(P)\cup E(C_1)\cup E(C_2)$ can be decomposed into three paths.
If $|V(P')|=5$, then $P'\cup C_2$ is not isomorphic to the exceptional graph (For otherwise, $C_2$ is a cycle of length 5 with $C_2=v_1v_3x_1v_2v_4v_1$ (see Figure 2($a$)), it follows that $x_2\notin V(P)\cup V(C_2)$, then we can redefine $P'=P+x_2v_1$, $P_1=C_1-x_2v_1$.).
By Lemma \ref{lem-K_5^-}, $E(P')\cup E(C_2)$ can be decomposed into two paths $P_2$ and $P_3$, thus $\{P_1, P_2,P_3\}$ is the desired decomposition.
If $x_1,x_2\in V(P)$,
then $\{x_1,x_2\}=\{v_3,v_4\}$ (see Figure 2 ($b$)). Let $x$ be the other neighbor of $v_3$ on $C_1$. It is easy to check that $x\notin V(P)$. Let $P''=xv_3v_2v_1v_4$, $P_4=C_1-xv_3-v_1v_4+v_3v_4$. Then $P''\cup C_2$ is not isomorphic to the exceptional graph, for otherwise, $v_3v_4\in E(C_2)$, which is a contradiction to the fact that $v_3v_4\in E(P)$. Thus, by Lemma \ref{lem-K_5^-}, $E(P'')\cup E(C_2)$ can be decomposed into two paths $P_5$, $P_6$, and then $\{P_4, P_5, P_6\}$ is as desired.
\end{proof}

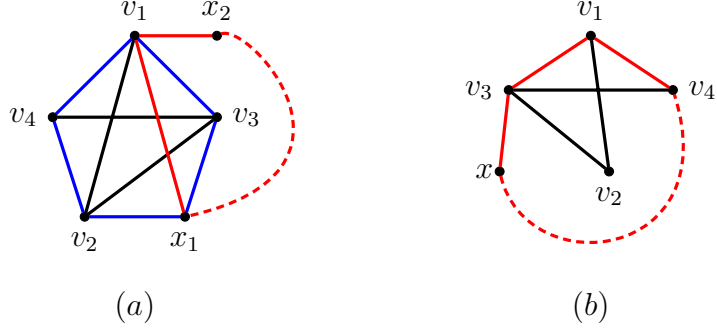
\begin{figure}[htbp]
\begin{center}
\begin{tikzpicture}[scale=0.6]

 \node  at(0,0) [circle, draw=black, fill=black, scale=0.3pt] (v1) {};
 \node [above] at (v1.north) {$v_1$};
 \node at (-1.1,-4)[circle, draw=black, fill=black, scale=0.3pt] (v2)  {};
 \node [below] at (v2.south) {$v_2$};
 \node  at (1.8,-1.8)  [circle, draw=black, fill=black, scale=0.3pt] (v3)  {};
 \node [right] at (v3.east) {$v_3$};
 \node at (-1.8,-1.8)   [circle, draw=black, fill=black, scale=0.3pt] (v4) {};
 \node [left] at (v4.west) {$v_4$};
 \node at (1.1,-4)   [circle, draw=black, fill=black, scale=0.3pt]  (x1) {};
 \node [below] at (x1.south) {$x_1$};
 \node at (1.8,0)   [circle, draw=black, fill=black, scale=0.3pt]  (x2) {};
 \node [above] at (x2.north) {$x_2$};

 \draw [very thick](v1)--(v2)--(v3)--(v4);
 \draw [very thick,blue](v1)--(v4)--(v2)--(x1)--(v3)--(v1);
 \draw [very thick, red](v1)--(x1);
 \draw [very thick, red](v1)--(x2);
 \draw [very thick,densely dashed,red](x2) .. controls (2.5,0.3) and (5.5,-3) .. (x1);
 \node at (0,-6)  {$(a)$};

 \node  at(10,0) [circle, draw=black, fill=black, scale=0.3pt] (v1) {};
 \node [above] at (v1.north) {$v_1$};
 \node at (8.2,-1.2)   [circle, draw=black, fill=black, scale=0.3pt] (v3) {};
 \node [left] at (v3.west) {$v_3$};
 \node  at (11.8,-1.2)  [circle, draw=black, fill=black, scale=0.3pt] (v4)  {};
 \node [right] at (v4.east) {$v_4$};
 \node at (8,-3)[circle, draw=black, fill=black, scale=0.3pt] (x)  {};
 \node [left] at (x.east) {$x$};
 \node at (10.4,-3)   [circle, draw=black, fill=black, scale=0.3pt]  (v2) {};
 \node [below] at (v2.south) {$v_2$};

 \draw [very thick](v1)--(v2)--(v3)--(v4);
 \draw [very thick,red](v1)--(v3);
 \draw [very thick, red](v1)--(v4);
 \draw [very thick,red](x)--(v3);
 \draw [very thick, densely dashed,red](x) .. controls (8.6,-5.5) and (13,-5).. (v4);
 \node at (10,-6)  {$(b)$};

\end{tikzpicture}
\end{center}\caption{ $C_1$, $C_2$ and $P$ are in red, blue and black, respectively.} \label{fig2}
\end{figure}

The following lemma finds a path decomposition when all the cycles have a common vertex.

\begin{lem}\label{6cycle-7paths}
Let $\mathcal{C}$ be a set of edge-disjoint cycles such that all elements in $\mathcal{C}$ contain a common vertex. If $|\mathcal{C}|\leq 6$, then $E(\mathcal{C})$ can be decomposed into $|\mathcal{C}|+1$ paths.
\end{lem}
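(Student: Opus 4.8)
The plan is to reformulate the statement around the common vertex $v$ and then split into the two regimes $k\le 5$ and $k=6$.

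First I would record the relevant structure. Since every cycle in $\mathcal{C}$ passes through $v$, the graph $G:=(V(\mathcal{C}),E(\mathcal{C}))$ is connected; moreover each cycle contributes an even amount to every degree, so $G$ is Eulerian and in particular $d_G(v)=2k$, with the $2k$ edges at $v$ running to $2k$ \emph{distinct} neighbours (the cycles are edge-disjoint and $G$ is simple). Deleting $v$ turns each $C_i$ into a path $P_i=C_i-v$ joining its two $v$-neighbours, so $G-v$ already carries a decomposition into the $k$ strands $P_1,\dots,P_k$. A convenient target is a decomposition sending all $k+1$ paths through $v$, with exactly two of them having $v$ as an endpoint and the remaining $k-1$ passing through $v$ in their interior; this is forced by a parity count, since each path meets $v$ in at most two edges and the number of path-ends at $v$ is even. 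Building such a decomposition amounts to pairing the $2k$ edges at $v$ into $k-1$ ``through'' pairs and two singletons so that the induced concatenations of strands remain \emph{simple} paths.

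For $k\le 5$ this follows directly from Lemma \ref{lem-at most 5 cycles to paths}, applied to the same family $\mathcal{C}$ with $P$ taken to be the trivial one-vertex path $v$. Then $E(P)=\emptyset$ is disjoint from every $E(C)$, and $V(P)\cap V(C)=\{v\}\neq\emptyset$ for every $C\in\mathcal{C}$ because $v$ lies on all cycles; furthermore $|V(P)|=1\le 4$ and $|V(P)|+|\mathcal{C}|=1+k\le 6$. Hence $E(P)\cup E(\mathcal{C})=E(\mathcal{C})$ decomposes into $|\mathcal{C}|+1=k+1$ paths, as required. The genuinely new case is $k=6$, where $|V(P)|+|\mathcal{C}|\le 6$ forbids this application. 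Here I would argue by reduction: delete one cycle $C_6$ (with $v$-neighbours $a_6,b_6$), decompose $C_1\cup\dots\cup C_5$ into at most $6$ paths by the settled case, and then reattach $C_6$ at the cost of a single extra path. Concretely, split $C_6$ at $v$ into the subpath $Q=C_6-vb_6$ from $v$ to $b_6$ (so $V(Q)=V(C_6)$) together with the single edge $f=vb_6$; keep $f$ as one new path and merge $Q$ onto a path $R$ of the existing decomposition that ends at $v$ and satisfies $V(R)\cap V(C_6)=\{v\}$, so that $R\cdot Q$ is again a simple path and the count rises by exactly one, to $7=k+1$.

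The main obstacle is precisely the existence of such an $R$: the decomposition of $C_1\cup\dots\cup C_5$ need not have any path ending at $v$, and even when it does, that path may meet $C_6$ at a vertex other than $v$, destroying simplicity. I would address this by strengthening the inductive statement to produce a decomposition into $k+1$ paths in which at least two paths have $v$ as an endpoint, and by using the freedom to choose which of the two edges of $C_6$ at $v$ to split on and in which direction to traverse $Q$. When an ending-at-$v$ path $R$ still meets $C_6$ at some $z\neq v$, I would perform an interchange at $z$, rerouting $R$ along $C_6$ starting from $z$ and trading the conflicting portions so as to disentangle the overlap. Because $k\le 6$, the number of strands and of potential conflicts is bounded, so this rerouting reduces to a finite, checkable analysis. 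Verifying that a conflict-free reattachment (or a valid interchange) always exists is the crux of the argument, and is exactly the place where the hypothesis $k\le 6$ is used.
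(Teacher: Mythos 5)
Your reduction for $k\le 5$ (apply Lemma \ref{lem-at most 5 cycles to paths} with $P$ the one-vertex path $v$) is exactly the paper's argument. But for $k=6$ your reattachment plan has a genuine gap, and it sits at precisely the hard configuration. Your merge requires a path $R$ in the decomposition of $C_1\cup\cdots\cup C_5$ that ends at $v$ and satisfies $V(R)\cap V(C_6)=\{v\}$. Such an $R$ can fail to exist for structural reasons, not merely through an unlucky choice of decomposition: the six cycles may all have the \emph{same} vertex set (e.g.\ decompose $K_{13}$ into six Hamiltonian cycles, which share every vertex), and then every path ending at $v$ with at least one edge meets $V(C_6)$ in a second vertex, so a conflict-free reattachment \emph{never} exists and your whole argument collapses onto the unspecified ``interchange at $z$''. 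That interchange is the entire theorem: rerouting $R$ along $C_6$ from an interior vertex generally destroys the simplicity either of $R$ or of whichever path absorbs the traded portion, and since the cycles have unbounded length the intersection patterns do not form a finite list, so ``$k\le 6$ implies a finite, checkable analysis'' is not a proof. Two further unproved steps: the strengthened induction you invoke (at least two paths of the $5$-cycle decomposition ending at $v$) is not delivered by Lemma \ref{lem-at most 5 cycles to paths} and would require reproving it; and your claim that parity \emph{forces} exactly two paths to end at $v$ is false --- the $2k$ edges at $v$ can also be covered by $k$ paths passing through $v$ with one path avoiding $v$ entirely.

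For comparison, the paper resolves $k=6$ by first reducing to the extremal case $V(C_i)=V(C_j)$ for all $i\ne j$ (otherwise a neighbor of $v$ on $C_i$ missing from $C_j$ permits an edge swap at $v$, after which Lemma \ref{lem-at most 5 cycles to paths} applies with a two-vertex path and four cycles). In that case it builds an explicit transversal path $v_1v_2\cdots v_6$ using one edge from each of $C_1,\ldots,C_5$, deletes the used edge from each cycle, and disposes of $C_6$ by a case analysis on where the $C_6$-neighbors of $v_1$ and $v_6$ fall; the decisive counting step is that a $6$-cycle has at most nine chords, which guarantees an edge escaping $\{v_1,\ldots,v_6\}$ and hence a valid seventh path. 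Your proposal neither reaches nor anticipates this construction, so the $k=6$ case remains open in your write-up.
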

\begin{proof}
If $|\mathcal{C}|\leq 5$, we apply Lemma \ref{lem-at most 5 cycles to paths} to $\mathcal{C}$ and $P=v$ ($v$ is a common vertex on every cycle in $\mathcal{C}$) and conclude that $E(\mathcal{C})$ can be decomposed into $|\mathcal{C}|+1$ paths.
So we only need to consider the case when $|\mathcal{C}|=6$, and aim to show that $E(\mathcal{C})$ can be decomposed into seven paths. Let $\mathcal{C}=\{C_1,\ldots, C_6\}$.

We first claim that $V(C_i)=V(C_j)$ for every $i\neq j$. For otherwise, let $u$ and $v$ be two vertices such that $v$ is contained in every cycle in $\mathcal{C}$ and $u$ is a neighbor of $v$ on $C_i$ that is not on $C_j$. Let $P_1=C_j-zv+vu$ where $zv\in E(C_j)$, $P_2=C_i-vu$, $\mathcal{C}'=\mathcal{C}\backslash \{C_i, C_j\}$ and $P'=zv$. Then $|V(P')|+|\mathcal{C}'|=2+4=6$. By Lemma \ref{lem-at most 5 cycles to paths}, $E(P')\cup E(\mathcal{C}')$ can be decomposed into five paths, which together with $P_1$ and $P_2$, is a desired path decomposition.

Pick $v_1v_2\in E(C_1)$, $v_3\in N_{C_2}(v_2)$ and $v_4\in N_{C_3}(v_3)\backslash \{v_1\}$. Clearly, $v_1v_2v_3v_4$ is a path. Let $v_5\in N_{C_4\cup C_5 \cup C_6}(v_4)\backslash\{v_1, v_2\}$. Then $v_1v_2v_3v_4v_5$ is also a path. Without loss of generality, assume that $v_4v_5\in E(C_4)$. Choose $v_6\in N_{C_5\cup C_6}(v_5)\backslash\{v_1, v_2, v_3\}$. It is clear that $v_1v_2v_3v_4v_5v_6$ is a path. Similarly, we may assume that $v_5v_6\in E(C_5)$. Let $x_1$, $x_2$ be two neighbors of $v_6$ on $C_6$ and $y_1$, $y_2$ be two neighbors of $v_1$ on $C_6$, respectively.
If $x_1\notin \{v_1, v_2, v_3, v_4\}$, set $P''=v_1v_2v_3v_4v_5v_6x_1$, $P_6'=C_6-v_6x_1$ and $P_i'=C_i-v_{i}v_{i+1}$ for $i=1,\ldots,5$. Then $\{P'', P_1',\ldots, P_6'\}$ is a desired path decomposition of $E(\mathcal{C})$.
So by symmetry, we may assume that $\{x_1,x_2,y_1,y_2\}\subseteq\{v_1, v_2, v_3,v_4,v_5,v_6\}$.

If $v_6\in \{y_1,y_2\}$, that is $v_1v_6\in E(C_6)$, then without loss of generality, assume that $y_1=v_6$ and $x_1=v_1$.
Let $C=v_1v_2v_3v_4v_5v_6v_1$.  For $i=2, \ldots, 5$, let $v_i'\in N_{C_{i-1}}(v_i)\backslash\{v_{i-1}\}$ and $v_i''\in N_{C_i}(v_i)\backslash\{v_{i+1}\}$. Then there exists some $i$ such that $v_i'\notin V(C)$ or $v_i''\notin V(C)$. For otherwise, it is easy to check that for all index $i\in \{2,3,4,5\}$, $v_iv_i', v_iv_i''$ are distinct chords of $C$ and $y_2v_1, x_2v_6\in E(C_6)$ are two chords of $C$ which are different from $v_iv_i'$ and $v_iv_i''$. This contradicts to the fact that a cycle of length 6 has at most nine chords.
If $v_i'\notin V(C)$, let $P''=C-v_{i-1}v_i+v_iv_i'$, $P_{i-1}'=C_{i-1}-v_{i}v_i'$, $P_6'=C_6-v_1v_6$, and $P_j'=C_j-v_{j}v_{j+1}$ for $j\in \{1,2,3,4,5\}\backslash \{i-1\}$; If $v_i''\notin V(C)$, let $P''=C-v_iv_{i+1}+v_iv_i''$, $P_{i}'=C_{i}-v_iv_i''$, $P_6'=C_6-v_1v_6$, and $P_j'=C_j-v_{j}v_{j+1}$ for $j\in \{1,2,3,4,5\}\backslash \{i\}$. In both cases, $\{P'', P_1',\ldots, P_6'\}$ is a path decomposition of $E(\mathcal{C})$, as desired.
Thus, we may assume that $v_6 \not\in \{y_1,y_2\}$. By symmetry, we have $\{y_1,y_2\}\subseteq \{v_3,v_4,v_5\}$ and $\{x_1,x_2\}\subseteq \{v_2,v_3,v_4\}$.

Now without loss of generality, suppose that $v_i=y_1$ and $v_j=y_2$ with $3\leq i<j\leq 5$. Let $w\in N_{C_{i-1}\cup C_{j-1}}(v_6)\backslash \{v_1,v_2,v_3,v_4\}$
(such vertex $w$ must exist since $\{x_1,x_2\}\subseteq \{v_2,v_3,v_4\}$).
For $s\in \{i,j\}$, set $P''=wv_6\ldots v_sv_1v_2\ldots v_{s-1}$, $P_{s-1}'=C_{s-1}-wv_6$, $P_6'=C_6-v_1v_s$, and $P_t'=C_t-v_tv_{t+1}$ for $t\in \{1,2,3,4,5\}\backslash \{s-1\}$. Then $\{P'', P_1',\ldots, P_6'\}$ is a path decomposition of $E(\mathcal{C})$, as desired.
\end{proof}

The following lemma is obtained by grouping cycles in families of six cycles.

\begin{lem}\label{lem-cycles-6}
Let $G$ be a connected graph consisting of $q$ edge-disjoint cycles such that all cycles contain a common vertex. 

(1) If $q\equiv 0 \pmod 6$, then $G$ can be decomposed into $\frac{7q}{6}$ paths;

(2) If $q\equiv \delta \pmod 6$ for $\delta=1,2,3,4,5$, then $G$ can be decomposed into $\frac{7q}{6}+1-\frac{\delta}{6}$ paths.
\end{lem}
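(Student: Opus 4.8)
The plan is to prove Lemma \ref{lem-cycles-6} by greedily partitioning the $q$ cycles into groups and applying Lemma \ref{6cycle-7paths} to each group. Since every cycle shares the common vertex $v$, any subcollection of the cycles still satisfies the hypothesis of Lemma \ref{6cycle-7paths} (they all contain $v$), so I can freely group cycles without worrying about connectivity or intersection conditions. The key accounting principle is that Lemma \ref{6cycle-7paths} converts any group of $m \le 6$ cycles into exactly $m+1$ paths, and these path decompositions for different groups are edge-disjoint since the cycles themselves are edge-disjoint. Hence the total number of paths is the sum, over all groups, of (number of cycles in the group $+1$), which equals $q$ plus the number of groups.

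For part (1), when $q \equiv 0 \pmod 6$, I would partition the $q$ cycles into exactly $q/6$ groups of six cycles each. Applying Lemma \ref{6cycle-7paths} to each group yields $6+1 = 7$ paths per group, for a total of $7 \cdot \frac{q}{6} = \frac{7q}{6}$ paths. This gives the claimed bound directly.

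For part (2), when $q \equiv \delta \pmod 6$ with $\delta \in \{1,2,3,4,5\}$, I would write $q = 6t + \delta$ and partition the cycles into $t$ groups of six together with one leftover group of $\delta$ cycles. The $t$ full groups contribute $7t$ paths, and the leftover group of $\delta$ cycles contributes $\delta + 1$ paths by Lemma \ref{6cycle-7paths} (valid since $\delta \le 5 \le 6$). Thus the total is $7t + \delta + 1$. Substituting $t = \frac{q-\delta}{6}$ gives $\frac{7(q-\delta)}{6} + \delta + 1 = \frac{7q}{6} - \frac{7\delta}{6} + \delta + 1 = \frac{7q}{6} + 1 - \frac{\delta}{6}$, matching the stated bound. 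The total number of groups in this case is $t+1$, consistent with the general formula of $q$ plus the number of groups after one verifies $q + (t+1) = \frac{7q}{6} + 1 - \frac{\delta}{6}$.

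This argument is essentially a clean bookkeeping reduction to Lemma \ref{6cycle-7paths}, so I do not anticipate a serious obstacle. The only point requiring a moment of care is confirming that the separate path decompositions glue together into a single valid path decomposition of all of $G$: because the cycles are pairwise edge-disjoint, the edge sets handled by different groups are disjoint, and within each group Lemma \ref{6cycle-7paths} already guarantees a valid decomposition of that group's edges, so the union is a path decomposition of $E(G)$ with the counted number of paths. One should also note that each group contains at least one cycle (so Lemma \ref{6cycle-7paths} applies nontrivially), and in part (2) the leftover group is nonempty precisely because $\delta \ge 1$.
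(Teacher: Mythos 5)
Your proposal is correct and matches the paper's own proof essentially verbatim: both partition the $q$ cycles into groups of six (plus one leftover group of $\delta$ cycles when $q\equiv\delta\pmod 6$), apply Lemma \ref{6cycle-7paths} to each group, and sum $m+1$ paths per group of $m$ cycles to obtain the stated bounds. Your additional remarks on edge-disjointness and gluing the per-group decompositions are accurate, just more explicit than the paper's one-line treatment.
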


\begin{proof}
We put $6$ cycles into a group.
By applying Lemma \ref{6cycle-7paths} to every group of $6$ cycles, we can decompose $G$ into at most $\frac{7q}{6}$ paths if $q\equiv 0 \pmod 6$.
If $q\equiv \delta \pmod 6$ for $\delta = 1,2,3,4,5$, then the remaining $\delta$ cycles can be decomposed into $\delta+1$ paths by applying Lemma \ref{6cycle-7paths} one more time.
Thus, $G$ has a path decomposition with at most $\frac{7(q-\delta)}{6}+\delta+1=\frac{7q}{6}+1-\frac{\delta}{6}$ paths for $\delta=1,2,3,4,5$.
\end{proof}

Let $\alpha(G)$ and $\beta(G)$ denote the number of vertices of odd degree and the number of non-isolated vertices of even degree in $G$, respectively.
We also need the following result in \cite{CFZ}.

\begin{theorem}[Theorem 3.1 in \cite{CFZ}]\label{CFZ}
If $G$ is a triangle-free graph, then $p(G)\leq \frac{\alpha(G)}{2}+\lfloor \frac{3\beta(G)}{5}\rfloor$.
\end{theorem}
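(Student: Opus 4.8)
The plan is to prove the inequality by induction after two simplifications. Since $\alpha$ and $\beta$ are both additive over the connected components of $G$, and since $\lfloor 3\beta_1/5\rfloor+\lfloor 3\beta_2/5\rfloor\le\lfloor 3(\beta_1+\beta_2)/5\rfloor$, it suffices to establish the bound on each component separately; summing recovers the full bound. Hence I would assume $G$ is connected. Isolated vertices affect none of $p(G)$, $\alpha$, $\beta$, so I would discard them and assume $\delta(G)\ge 1$. If every vertex of $G$ has odd degree, then $\beta=0$ and $G$ has no even-degree vertex at all, so Theorem \ref{thm-Lov} yields $p(G)\le |V(G)|/2=\alpha/2$, exactly the claim. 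Thus I may assume $G$ has at least one non-isolated even-degree vertex, and the whole difficulty lies in controlling these.

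\textbf{Extremal decomposition and the quantity to bound.} The main device is to study a path decomposition $\mathcal P$ attaining $p(G)$ that is \emph{extremal}: among all decompositions into $p(G)$ paths, fix one minimizing a secondary potential (for instance the number of paths that are single edges, or a suitably weighted sum of path lengths) chosen so that no two paths sharing an endpoint can be merged. Counting endpoints gives $2p(G)=\sum_{v\in V(G)}\mathcal P(v)$, where every odd-degree vertex has $\mathcal P(v)\ge 1$ (and odd) and every even-degree vertex has $\mathcal P(v)$ even. Writing the \emph{excess} as $\sum_{v\text{ odd}}(\mathcal P(v)-1)+\sum_{v\text{ even}}\mathcal P(v)$, the target inequality is equivalent to showing this excess is at most $2\lfloor 3\beta/5\rfloor$. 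In other words, the $\alpha$ in the bound pays for one path-end at each odd vertex, and everything beyond that baseline — extra ends at odd vertices together with all ends at even vertices — must be charged against the even vertices at an amortized rate of $\tfrac65$ per even vertex, i.e. at most six surplus ends for every five even vertices.

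\textbf{The discharging core and the role of triangle-freeness.} The structural input from minimality is: if two distinct paths of $\mathcal P$ end at a common vertex $v$ and share no other vertex, then reversing one and concatenating at $v$ merges them into a single path, reducing $|\mathcal P|$, a contradiction. Hence any two paths ending at a common $v$ meet in a second vertex, so a vertex with large $\mathcal P(v)$ forces many cycles through $v$; girth at least $4$ then forces these cycles to be long and to distribute their incidences over many distinct neighboring vertices. I would set up a discharging scheme sending charge from each vertex with surplus ends to the vertices forced onto these long cycles, and argue that triangle-freeness prevents charge from accumulating, giving the $\tfrac65$ rate. Whenever $\mathcal P(v)$ is genuinely too large, the cycle-combination estimates (Lemmas \ref{lem-at most 5 cycles to paths} and \ref{6cycle-7paths}) can be applied \emph{locally} to reroute paths through the cluster of cycles meeting near $v$ and lower $\mathcal P(v)$, which is precisely the mechanism forcing the local contribution down to the claimed rate; the grouping into fives is what produces the constant $3/5$.

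\textbf{Main obstacle.} I expect the crux to be neither the reductions nor the base case but the precise discharging that pins down the exact constant $3/5$ and, in particular, the floor $\lfloor 3\beta/5\rfloor$ rather than a clean $3\beta/5$. The difficulty is that an even vertex can legitimately be the endpoint of two paths, so one cannot simply forbid $\mathcal P(v)>0$; the argument must instead show quantitatively that vertices with $\mathcal P(v)\ge 2$ are rare, and it must do so without destroying the sharp instances (such as $K_{2,3}$, where the bound is attained with equality). The genuinely hard point is compatibility between the local rerouting and the global minimality of $\mathcal P$: one must ensure that reducing $\mathcal P(v)$ at a bad even vertex does not create a new bad vertex elsewhere, so that the amortized accounting over groups of five even vertices is consistent everywhere at once.
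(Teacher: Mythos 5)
The first thing to note is that the paper does not prove this statement at all: it is imported verbatim as Theorem 3.1 of \cite{CFZ} and used as a black box in the proof of Theorem \ref{second-thm}, so there is no internal proof to compare against. Judged on its own, your proposal is not a proof but a strategy outline, and the gap is exactly where you locate it yourself: the entire content of the theorem is the amortized bound of $\tfrac{6}{5}$ surplus path-ends per non-isolated even-degree vertex, and you assert rather than construct the discharging scheme that would deliver it. Your reductions (componentwise splitting via superadditivity of the floor, discarding isolated vertices, the all-odd base case via Theorem \ref{thm-Lov}) and your endpoint accounting ($2p(G)=\sum_{v}\mathcal{P}(v)$ with $\mathcal{P}(v)$ odd at odd-degree vertices and even at even-degree vertices, so the claim is equivalent to bounding the excess by $2\lfloor 3\beta/5\rfloor$) are correct but routine; everything after that --- how girth at least $4$ forces the cycles arising from endpoint-sharing paths to spread over many vertices, what the charging rule is, and why charge cannot accumulate --- is precisely the lengthy inductive argument of \cite{CFZ}, and no step of it appears here. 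Note also the hardest case is connected Eulerian triangle-free graphs, where $\alpha=0$ and the whole bound $\lfloor 3n/5\rfloor$ must come from the part you leave open.

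There is also a concrete misdirection: you propose to drive the discharging with Lemmas \ref{lem-at most 5 cycles to paths} and \ref{6cycle-7paths} and claim that ``the grouping into fives is what produces the constant $3/5$.'' Those lemmas group cycles into \emph{sixes} (six edge-disjoint cycles through a common vertex decompose into seven paths, a $7/6$ rate), and in this paper they play the opposite role: they are applied \emph{on top of} the already-given CFZ decomposition, to reabsorb the edges at the removed triangle vertices $v_i$ in the proof of Theorem \ref{second-thm}. They say nothing about triangle-free graphs, and the arithmetic there shows the $7/6$ rate only closes because each $v_i$ has $d_G(v_i)\ge 4$ and one banks the slack $\frac{d_i}{2}-\frac{3}{5}(1+d_i)$ --- i.e.\ the constant $3/5$ must already be present in the triangle-free input, not produced by these lemmas. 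Finally, be careful with ``two paths ending at $v$ meeting in a second vertex forces cycles through $v$'': the union of the two subpaths between $v$ and a common vertex is in general only a closed walk, and extracting cycles with controlled location from such walks is itself a nontrivial part of any argument of this kind. In short: the bookkeeping and base case are fine, but the theorem's core is missing.
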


\section{Proof of main theorems}

\noindent
In this section, we prove Theorems \ref{main-thm} and \ref{second-thm}.
First we show Theorem \ref{second-thm}.


\begin{proof}[Proof of Theorem \ref{second-thm}]
Let $R=\{v_{1}, v_{2},\ldots, v_{k}\}$ be a triangle removal set of $G$. Let $G'=G-R$. Since all vertices in $V(G)\backslash R$ have even degree in $G$ and $(N_G(v_i)\cup \{v_i\})\cap (N_G(v_j)\cup \{v_j\})=\emptyset$ for any $i, j\in [k]$ by definition of $R$, all the neighbors of $v_{1}, v_{2},\ldots, v_{k}$ have odd degree in $G'$, and thus $\alpha(G')=d_1+\ldots+d_k$, $\beta(G')=n-k-d_1-\ldots-d_k$ where $d_i=d_G(v_i)$.
Since $G'$ is a triangle-free graph, by Theorem \ref{CFZ},  $G'$ has a path decomposition $\mathcal{P}'$ such that
$$
\begin{aligned}
|\mathcal{P}'|
&\leq \frac{1}{2} (d_1+\ldots+d_k)+\left\lfloor\frac{3}{5}(n-k-d_1-\ldots-d_k)\right\rfloor\\
&\leq \frac{1}{2} (d_1+\ldots+d_k)+\frac{3}{5}(n-k-d_1-\ldots-d_k)\\
&=\frac{3n}{5}+(\frac{d_1}{2}-\frac{3}{5}(1+d_1))
+\ldots
+(\frac{d_k}{2}-\frac{3}{5}(1+d_k)).
\end{aligned}
$$

First consider $v_{1}$. Let $N_G(v_{1})=\{v_{11}, v_{12}, \ldots, v_{1d_1}\}$. For $j=1,2,\ldots, d_1$, we may assume that $P_j$ is a path in $\mathcal{P}'$ containing $v_{1j}$ as one of end vertices since $v_{1j}$ has odd degree in $G'$. (Note that it is possible that $P_j=P_t$ for some $t \neq j\in \{1,2,\ldots, d_1\}$.)
For $j=1,2,\ldots, d_1$, set $P_j':=P_j+v_{1}v_{1j}+v_{1}v_{1t}$ if $P_j=P_t$ with $j\neq t$; or otherwise, $P_j':=P_j+v_{1}v_{1j}$.
Note that $P_j'$ is a cycle in the first case, or a path in the second case.
If every $P_j'$ is a path, let $\mathcal{P}=(\mathcal{P}'\backslash\{P_1,\ldots,P_{d_1}\})\cup \{P_1',\ldots,P_{d_1}'\}$, then $\mathcal{P}$ is a path decomposition of $G'+v_1$ with $|\mathcal{P}|=|\mathcal{P}'|$. Otherwise, let $\{P_1',\ldots,P_q'\}$ be the set of all cycles that appear after the above operation. Then $1\leq q\leq \frac{d_1}{2}$.
By Lemma \ref{lem-cycles-6}, $E(P_1')\cup E(P_2')\cup \ldots \cup E(P_q')$ can be decomposed into at most $t$ paths $P_1'',\ldots,P_t''$, where $t=\frac{7q}{6}$ if $q\equiv 0 \pmod 6$, or $t=\frac{7q}{6}+1-\frac{\delta}{6}$ if $q\equiv \delta \pmod 6$ for $\delta=1,2,3,4,5$.
Let $\mathcal{P}=(\mathcal{P}'\backslash\{P_1,\ldots,P_{d_1}\})\cup \{P_1'',\ldots,P_t''\}\cup \{P_j': P_j'  \text{ is a path}, j\in\{1,\ldots,d_1\}\}$. Then $\mathcal{P}$ is a path decomposition of $G_1=G'+v_{1}=G-\{v_{2},\ldots, v_{k}\}$ with
$$|\mathcal{P}|=\left\{
\begin{aligned}
&|\mathcal{P}'|-q+\frac{7q}{6},
~\text{if}~q\equiv 0 \pmod 6;\\ &|\mathcal{P}'|-q+\frac{7q}{6}+1-\frac{\delta}{6},
~\text{if}~q\equiv \delta \pmod 6~\text{for}~
\delta=1,2,3,4,5.
\end{aligned}
\right.
$$

Since $q \le \frac{d_1}{2}$, we have

$$|\mathcal{P}|\leq \frac{3n}{5}
+(\frac{d_2}{2}-\frac{3}{5}(1+d_2))+\ldots
+(\frac{d_k}{2}-\frac{3}{5}(1+d_k))$$
unless $q \equiv 1,2 \pmod 6$.

When $q \equiv 2 \pmod 6$,  since $d_1 \ge 4$, we have
$$\begin{aligned}
|\mathcal{P}|
&\leq \frac{3n}{5}
+\frac{d_1}{12} + 1 - \frac{2}{6}
+(\frac{d_1}{2}-\frac{3}{5}(1+d_1))
+(\frac{d_2}{2}-\frac{3}{5}(1+d_2))+\ldots
+(\frac{d_k}{2}-\frac{3}{5}(1+d_k)) \\
&\leq \frac{3n}{5}
-\frac{d_1}{60}+\frac{1}{15}
+(\frac{d_2}{2}-\frac{3}{5}(1+d_2))+\ldots
+(\frac{d_k}{2}-\frac{3}{5}(1+d_k)) \\
&\leq \frac{3n}{5}
+(\frac{d_2}{2}-\frac{3}{5}(1+d_2))+\ldots
+(\frac{d_k}{2}-\frac{3}{5}(1+d_k)). \\
\end{aligned}
$$

When $q \equiv 1 \pmod 6$, we have
$$\begin{aligned}
|\mathcal{P}|
&\leq \frac{3n}{5}
+\frac{q}{6} + 1 - \frac{1}{6}
+(\frac{d_1}{2}-\frac{3}{5}(1+d_1))
+(\frac{d_2}{2}-\frac{3}{5}(1+d_2))+\ldots
+(\frac{d_k}{2}-\frac{3}{5}(1+d_k)) \\
&\leq \frac{3n}{5}
+\frac{q}{6}-\frac{d_1}{10}+\frac{7}{30}
+(\frac{d_2}{2}-\frac{3}{5}(1+d_2))+\ldots
+(\frac{d_k}{2}-\frac{3}{5}(1+d_k)). \\
\end{aligned}
$$

Since $q \le \frac{d_1}{2}$, we have that $\frac{q}{6}-\frac{d_1}{10}+\frac{7}{30} \le \frac{d_1}{12}-\frac{d_1}{10}+\frac{7}{30} = \frac{14-d_1}{60} \le 0$ if $d_1 \ge 14$.
If $d_1 \le 13$, then $q \le 6$.
Since $q \equiv 1 \pmod 6$, $q = 1$.
Thus, $\frac{q}{6}-\frac{d_1}{10}+\frac{7}{30}=\frac{1}{6}-\frac{d_1}{10}+\frac{7}{30} \le 0$ since $d_1 \ge 4$.
Therefore, we have
$$\begin{aligned}
|\mathcal{P}|
&\leq \frac{3n}{5}
+(\frac{d_2}{2}-\frac{3}{5}(1+d_2))+\ldots
+(\frac{d_k}{2}-\frac{3}{5}(1+d_k)). \\
\end{aligned}
$$

Performing the same operations on every vertex in $\{v_{2}, v_{3},\ldots, v_{k}\}$, we obtain that $G$ has a path decomposition with at most $\frac{3n}{5}$ paths.
This completes the proof.
\end{proof}

Now Theorem \ref{main-thm} is a direct consequence of Theorem \ref{second-thm}.

\begin{proof}[Proof of Theorem \ref{main-thm}]
Let $k$ denote the number of triangles in $G$.
If $k=0$, it follows from Theorem \ref{CFZ}.
Now suppose $k\geq 1$.
Let $\mathcal{T}=\{T_1, T_2, \ldots, T_k\}$ be the set of triangles in $G$, and for each $i=1,2,\ldots, k$, let $v_{i}$ denote a vertex of $T_i$ with maximum $d_G(v_i)$.
Since $G$ is Eulerian on $n \ge 4$ vertices, every triangle $T_i$ has at least one vertex of degree at least $4$. So $d_G(v_i)\geq 4$. Since the distance between any two triangles is at least 3, $(N_G(v_i)\cup \{v_i\})\cap (N_G(v_j)\cup \{v_j\})=\emptyset$ for any $i, j\in [k]$. So $\{v_1,\ldots,v_k\}$ is a triangle removal set of $G$.
It follows from Theorem \ref{second-thm} that $p(G)\leq\frac{3n}{5}$.
This completes the proof.
\end{proof}

\end{document}